\documentclass[12pt]{article}

\usepackage{amssymb, amsmath, amsthm, graphicx}
\usepackage[left=1in,top=1in,right=1in]{geometry}
\usepackage[color=Orange!50!white, textwidth=22mm]{todonotes}

\usepackage{pgfplots}
\pgfplotsset{compat=1.15}
\usepackage{mathrsfs}
\usetikzlibrary{arrows}

\theoremstyle{plain}
      \newtheorem{theorem}{Theorem}
      \newtheorem{lemma}[theorem]{Lemma}

\theoremstyle{definition}

\theoremstyle{remark}

\long\def\comment#1{}

\title{How large part of a graph can be covered\\ by the neighborhoods of $k$ vertices? }
\author{J\'anos Pach\thanks{Alfr\'ed R\'enyi Institute of Mathematics, Budapest, Hungary. Email: \texttt{pach@renyi.hu}.}}

\begin{document}
\date{}

\maketitle

\begin{abstract}
Let $k\ge 2$ be fixed integer, $0<c<1$ a constant. Consider a graph $G$ with $n$ vertices and average degree $cn$. We answer a question of Simon Griffiths by showing that $G$ has $k$ vertices such that their neighborhoods together cover at least $\min(1-(1-c)^{k},\sqrt{c})n$ vertices. This result is essentially tight.
\end{abstract}

\section{Introduction}


Let $G$ be a fixed graph with vertex set $V$ and edge set $E$, where $n=|V|.$  For any vertex $u\in V,$ let $N(u)$ denote the \emph{neighborhood} of $u$ in $G$, that is $N(u)=\{x\in V : ux\in E\}$. The size of $N(u)$ is the \emph{degree} of $u$, and is denoted by $d(u)$. Let $\Delta(G)=\Delta$ and $\overline{d}(G)=\overline{d}$ stand for the \emph{maximum} degree and the \emph{average} degree of the vertices of $G$. Clearly, we have $\Delta\ge\overline{d}.$

Let $k\ge 2$ be a fixed integer, and assume that the average degree of the vertices,
$$\overline{d}(G)=\sum_{u\in V}d(u)/n=cn.$$
We want to estimate the largest number $h(G)$ such that $G$ has $k$ vertices, $u_0, u_1,\ldots u_k$ such that $|N(u_0)\cup\ldots\cup N(u_{k-1})|\ge h(G).$
\smallskip

We start with two simple examples. Consider a random graph $\mathbf{G}(n,c)$ on $n$ vertices with edge probability $c\; (0<c<1)$. For a given set of $k$ vertices, $u_0,\ldots,u_{k-1},$ the probability that a fixed vertex different from them is not connected to any of them is $(1-c)^k$, Thus, the expected size of $$\bigcup_{i=0}^{k-1}N(u_i)\setminus\{u_0,\ldots,u_{k-1}\}=(n-k)(1-(1-c)^k)=(1-(1-c)^k+o(1))n.$$It easily follows by Chernoff's bound~\cite{AS, JLR} that, as $n\rightarrow\infty$, almost surely the size of the union of the neighborhoods of every $k$-tuple of vertices will be at most $(1-(1-c)^k+o(1))n.$ The same estimates hold for quasi-random graphs~\cite{CGW}.

Our second example is a graph $G'$ that consists of a complete graph with $\lceil\sqrt{c}n\rceil+1$ vertices and its remaining vertices are isolated. Clearly, the average degree of $G'$ satisfies $\overline{d}(G')\ge cn$, but the size of the union of no set of neighborhoods in $G'$ exceeds $\lceil\sqrt{c}n\rceil+1$.
\smallskip

At the problem session of the 60th birthday conference of Bruce Reed (Oxford, July 2022), Simon Griffiths conjectured that for every $c, \; 0<c<1,$ one of the above two graphs is asymptotically optimal, that is, the following theorem is true.

\begin{theorem}\label{main}
Let $k\ge 2$, and let $G=(V,E)$ be a graph on $n$ vertices with average degree $cn$, where $0\le c\le 1$.
Then there exist vertices $u_0,\dots,u_{k-1}\in V$ such that
\[
\bigl|N(u_0)\cup\cdots\cup N(u_{k-1})\bigr|
\ge
\min\bigl\{(1-(1-c))^k n,\,\sqrt c\,n\bigr\}.
\]
\end{theorem}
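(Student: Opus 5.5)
I will prove the intended bound $\min\{(1-(1-c)^k)n,\sqrt c\,n\}$. This implies the bound as printed, since $c^k\le c\le 1-(1-c)^k$. The plan is a case split on the maximum degree $\Delta$, followed by a greedy covering argument in the bounded-degree case.

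\textbf{Case $\Delta\ge\sqrt c\,n$.} Take $u_0$ to be a vertex of maximum degree and $u_1,\dots,u_{k-1}$ arbitrary. Then the union already has size at least $\sqrt c\,n$, and we are done. So from now on we assume $d(x)<\sqrt c\,n$ for every $x\in V$.

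\textbf{Case $\Delta<\sqrt c\,n$.} I would choose the vertices greedily. Let $U_i$ be the set of vertices not covered by $N(u_0)\cup\dots\cup N(u_{i-1})$, and write $a_i=|U_i|/n$, so $a_0=1$. Double counting gives
\[
\max_u |N(u)\cap U_i| \ge \frac1n\sum_u |N(u)\cap U_i| = \frac1n\sum_{x\in U_i} d(x).
\]
The goal is therefore a lower bound on the total degree $D(U_i)=\sum_{x\in U_i}d(x)$ of the uncovered vertices. Ideally I want $D(U_i)\ge c\,n\,|U_i|$, i.e.\ average degree at least $cn$ in the uncovered set. That would give the recursion $a_{i+1}\le a_i(1-c)$, hence $a_k\le(1-c)^k$, which is exactly the random-graph bound. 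So the key step is to show that one can always pick $u_i$ so that the uncovered set does not become degree-poor.

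The crude estimate
\[
D(U_i)\ge cn^2-\Delta\,(n-|U_i|)\ge cn^2-\sqrt c\,n^2(1-a_i)
\]
uses only $\Delta<\sqrt c\,n$. It handles the regime where $a_i$ is close to $1$, but it is too weak in general. To improve it, I would select $u_i$ not purely greedily but at random, with probability proportional to $d(u)$. Under this choice a vertex $x$ is covered with probability $\sum_{u\in N(x)}d(u)/(cn^2)$, so high-degree vertices, and vertices adjacent to high-degree vertices, tend to be covered first. I would then track the expected uncovered degree mass by convexity: the quantity $\sum_x(1-p_x)^k$ with the $p_x$ constrained by $\Delta<\sqrt c\,n$. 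For comparison, uniform random choice together with the chord bound $1-(1-t)^k\ge (t/s)(1-(1-s)^k)$ on $[0,s]$, $s=\sqrt c$, yields only $\sqrt c\,(1-(1-\sqrt c)^k)n$. This shows the weighting is essential.

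\textbf{Main obstacle.} The hard step is exactly this extremal inequality in the bounded-degree case. I must show that, under $\Delta<\sqrt c\,n$, the configuration minimizing coverage is the near-regular one of density $c$, not a mixture of a dense clique with isolated vertices. My first attempt would be an exchange (smoothing) argument on the degree sequence. The aim is to show that the expected coverage of the degree-weighted random $k$-tuple decreases when degrees are made more equal, provided the cap $\sqrt c\,n$ is respected, so the extremal cases are the two examples in the introduction. Failing that, I would fall back on the greedy recursion and handle the low-$a_i$ regime separately by the crude estimate.
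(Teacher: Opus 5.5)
You handle the case $\Delta\ge\sqrt c\,n$ correctly, and you read the misprinted bound correctly. The case $\Delta<\sqrt c\,n$, however, is the entire content of the theorem, and you do not prove it: the degree-weighted sampling and the smoothing argument are only announced.

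Moreover, the target you set for this case cannot be reached. Your invariant $D(U_i)\ge cn|U_i|$ would give coverage at least $(1-(1-c)^k)n$ whenever $\Delta<\sqrt c\,n$, and that is false. Take $k=2$, a clique on $0.7n$ vertices, and a $d$-regular graph with $d\approx n/30$ on the remaining $0.3n$ vertices. Then $c\approx 1/2$ and $\Delta\approx 0.7n<\sqrt c\,n$. Yet the best two neighborhoods (one clique vertex and one sparse vertex) cover only about $0.733n<\frac{3}{4}n\approx(1-(1-c)^2)n$. So under the degree cap the near-regular graph is \emph{not} extremal, and no exchange argument driving towards it can work. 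In the bounded-degree case you still have to prove the mixed bound $\min\{1-(1-c)^k,\sqrt c\}$, using an estimate that depends on the actual value of $\Delta$.

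The missing idea is that uniform sampling does suffice, once you stop applying the chord bound to all vertices with the global cap $\sqrt c$. Take $u_0$ of maximum degree $\Delta=sn$ deterministically, so that $c\le s<\sqrt c$. Apply your crude estimate just once: $T=V\setminus N(u_0)$ satisfies $\sum_{x\in T}d(x)\ge cn^2-|N(u_0)|\Delta=(c-s^2)n^2$. Now pick only $u_1,\dots,u_{k-1}$ uniformly at random. Use concavity of $t\mapsto 1-(1-t)^{k-1}$ on $[0,s]$ (cap $s$, not $\sqrt c$), applied to the vertices of $T$ only. This gives coverage at least $F(s)n$ with $F(s)=s+\frac{1-(1-s)^{k-1}}{s}(c-s^2)$, which interpolates between $F(c)=1-(1-c)^k$ and $F(\sqrt c)=\sqrt c$.

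What remains is the one-variable inequality $F(s)\ge\min\{1-(1-c)^k,\sqrt c\}$ on $[c,\sqrt c]$. The paper proves it by substituting $x=1-s$ and factoring $F(s)-(1-(1-c)^k)=(1-c-x)P(x)$ and $F(s)-\sqrt c=(\sqrt c-s)Q(x)$. It then locates the unique positive roots of $P$ and $Q$ with Descartes' rule of signs.
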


The problem was also featured as Questions 1 and 2 in the seminal paper of Jos\'e D. Alvarado, Leonardo Gon\c{c}alves de Oliveira, and Simon Griffiths~\cite{AlGG} (page 842).
\smallskip

In the next section, we prove the above result. Note that formally the theorem is true for $k=1$, too, and also for $c=0$ or $c=1$, but these cases are trivial.
\medskip

Before turning to the proof, we remark that an easy averaging argument shows that the theorem is asymptotically true for $k=2$, provided that $G$ is \emph{regular}, that is $d(u)-cn$ for every $u\in V$. Let $H$ denote the complement of $G$, and let $N_H(w)$ stand for the neighborhood of $w$ in $H$. Using the fact that $|N(u_1)\cup N(u_2)|\ge n-2-|N_H(u_1)\cap N_H(u_2)|,$ it is sufficient to show that there exist $u_1,u_2\in V$ such that
\begin{equation}\label{eq0}
|N_H(u_1)\cap N_H(u_2)|\le ((1-c)^2+o(1))n.
\end{equation}
To see this, estimate the sum of $|N_H(u_1)\cap N_H(u_2)|$ over all the $\binom{n}{2}$ unordered pairs of distinct vertices $\{u_1,u_2\}.$  We have
\[
\sum_{\{u_1,u_2\}}|N_H(u_1)\cap N_H(u_2)|=\sum_{v\in V}\binom{d_H(v)}{2}=n\binom{n-1-cn}{2}.
\]
Dividing the right-hand side by $\binom{n}{2},$ inequality (\ref{eq0}) follows.
\smallskip

After this initial success, we expected that, with a little extra effort, the argument could be extended to all degree sequences. We explored several plausible greedy strategies to find $k$ neighborhoods that collectively maximize vertex coverage; however, their outcomes proved resistant to analysis. Ultimately, we returned to the averaging approach, albeit with a small tweak: we started the procedure with a vertex of maximum degree. Miraculously, this slight modification made the underlying calculations tractable. The devil resides in the details of the calculus!

\section{Proof of Theorem~\ref{main}}

Choose a vertex $u_0\in V$ of \emph{maximum} degree $\Delta,$ and denote its neighborhood in $N(u_0)$ by $S$. Write
\[
d(u_0)=\Delta=sn,\qquad S:=N(u_0),\qquad T:=V\setminus S.
\]
Since $\Delta$ is at least the average degree $\overline{d}(G)$, we have $s\ge c$.

If $s\ge \sqrt c$, then already
\[
|N(u_0)|=\Delta=sn\ge \sqrt c\,n,
\]
and we are done.

Thus, we may assume throughout that
\[
c\le s<\sqrt c.
\]

Since every vertex of $S$ has degree at most $\Delta$, we have
\[
\sum_{u\in V(G)} |N(u)\cap S|
=\sum_{s\in S} d(s)
\le |S|\,\Delta
=\Delta^2
=s^2n^2.
\]
As the total degree sum is $cn^2$, it follows that
\begin{equation}\label{eq1}
\sum_{u\in V(G)} |N(u)\cap T|
=\sum_{u\in T} d(u)
=cn^2-\sum_{u\in V(G)} |N(u)\cap S|
\ge (c-s^2)n^2.
\end{equation}
\medskip

Choose $u_1,\dots,u_{k-1}\in V(G)$ independently and uniformly at random.
For any $u\in T$. The probability that $u$ lies in at least one of the neighborhoods
$N(u_1),\dots,N(u_{k-1})$ is
\[
1-\left(1-\frac{d(u)}{n}\right)^{k-1}.
\]
Notice that the function
\begin{equation}\label{eq2}
f(x):=1-(1-x)^{k-1}\qquad (0\le t\le 1).
\end{equation}
is nonnegative, concave on the interval $[0,1]$, and we have $f(0)=0, f(1)=1$.

Since $d(u)/n\le \Delta/n=s\le\sqrt{c}\le 1$, by concavity we obtain

\[
\frac{f\!\left(d(u)/n\right)}{d(u)/n}
\ge\frac{f(s)}{s}.
\]

Therefore, in view of (\ref{eq1}), the expected number of vertices of $T$ covered by
$N(u_1)\cup\cdots\cup N(u_{k-1})$ is at least
\[
\frac{f(s)}{s}\cdot \frac{1}{n}\sum_{u\in T} d(u)
\ge \frac{1-(1-s)^{k-1}}{s}\,(c-s^2)n.
\]
Hence, for a suitable choice of (not necessarily distinct) $u_1,\dots,u_{k-1}\in V$, we have
\[
\bigl|(N(u_1)\cup\cdots\cup N(u_{k-1}))\cap T\bigr|
\ge
\frac{1-(1-s)^{k-1}}{s}\,(c-s^2)n.
\]
Adding the neighborhood $S=N(u_0)$, we conclude that
\[
\bigl|N(u_0)\cup N(u_1)\cup\cdots\cup N(u_{k-1})\bigr|
\ge \left(s+\frac{1-(1-s)^{k-1}}{s}(c-s^2)\right)n.
\]
\medskip

Thus, it remains to prove that for $c\le s\le \sqrt c,$ the last expression is at least $$\min\{1-(1-c)^k,\sqrt c\}.$$
This is the content of the next lemma.

Let $k\ge 2$. To simplify the notation, set
\[
F(s):=s+\frac{1-(1-s)^{k-1}}{s}(c-s^2),\qquad  G(c):=1-(1-c)^k.
\]

\begin{lemma}\label{lemma2}
Let $k\ge 2$ be fixed. For every $\;0<c<1$ and every $s\in[c,\sqrt c]$, we have
\[
F(s)\ge \min\{G(c),\sqrt c\}.
\]
\end{lemma}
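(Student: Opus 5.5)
The plan is to exploit the behaviour of $F$ at the two ends of the interval. Write $g(s):=\frac{1-(1-s)^{k-1}}{s}=\sum_{j=0}^{k-2}(1-s)^j$, so that $F(s)=s+(c-s^2)g(s)$. Direct substitution gives
\[
F(c)=c+(1-c)\bigl(1-(1-c)^{k-1}\bigr)=1-(1-c)^k=G(c),\qquad F(\sqrt c)=\sqrt c .
\]
So the two endpoint values of $F$ on $[c,\sqrt c]$ are exactly the two quantities in the minimum. Hence it suffices to show that $F$ has no interior minimum on $[c,\sqrt c]$, i.e.\ that $F$ is \emph{quasi-concave} there. Then $F(s)\ge\min\{F(c),F(\sqrt c)\}=\min\{G(c),\sqrt c\}$.

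To prove quasi-concavity I will study
\[
F'(s)=1-2s\,g(s)+(c-s^2)g'(s).
\]
The goal is to show that every zero $s_0\in(c,\sqrt c)$ of $F'$ is a strict local maximum. Equivalently, $F'$ changes sign at most once on the interval, and only from $+$ to $-$.

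Note that $F$ itself need not be concave. Indeed, $(c-s^2)g(s)$ contains the convex piece $c\,g(s)$, since each $(1-s)^j$ is convex. So I would avoid second derivatives globally and proceed as follows:
\begin{itemize}
\item Use the relation $F'(s_0)=0$ to solve for $c-s_0^2$ in terms of $g(s_0)$ and $g'(s_0)$.
\item Substitute this into $F''(s_0)=-2g-4sg'+(c-s^2)g''$, evaluated at $s_0$.
\item Show the resulting expression, which depends only on $s_0$, is negative. This should reduce to an inequality among $g,g',g''$ in the variable $t=1-s$, provable termwise from the explicit sums $\sum t^j$, $\sum jt^{j-1}$, $\sum j(j-1)t^{j-2}$.
\end{itemize}

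As a fallback, use a second viewpoint: for fixed $s$, $F$ is linear and increasing in $c$, and the constraint $s\in[c,\sqrt c]$ becomes $c\in[s^2,s]$. At $c=s^2$ the inequality is an equality with $\sqrt c$, and at $c=s$ it is an equality with $G$. Let $c^*$ be the unique crossing point of $G(c)$ and $\sqrt c$; there $G<\sqrt c$ for small $c$ and $G\ge\sqrt c$ afterwards. Then one can split into the cases $c\le c^*$, where it suffices to show $F\ge G$, and $c\ge c^*$, where it suffices to show $F\ge\sqrt c$. Each case compares a function linear in $c$ with a concave function of $c$ on a subinterval. One then checks the inequality at the relevant endpoint together with a one-sided derivative comparison at the matching endpoint.

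The main obstacle is the sign analysis at a critical point: showing that $-2g-4sg'+(c-s^2)g''<0$ whenever $F'=0$. The term $-4sg'$ is positive, so genuine cancellation is needed. I would first try clearing denominators in the variable $t=1-s$ to obtain a polynomial inequality, and check that its coefficients are nonnegative, possibly after factoring out $(1-t)$. If that fails, I would switch to the case split in $c$ described above.
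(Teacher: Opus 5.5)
\textbf{Verdict: genuine gap.} Your reduction is sound. Since $F(c)=G(c)$ and $F(\sqrt c)=\sqrt c$, the lemma follows once you know $F$ has no interior local minimum on $[c,\sqrt c]$. But that unimodality is the entire content of the lemma, and you do not prove it.

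The route you sketch fails as stated. After eliminating $c$ via $F'(s_0)=0$, the resulting function of $s_0$ is \emph{not} negative on all of $(0,1)$, so no termwise coefficient comparison can succeed. For $k=3$ you have $g=2-s$ and $g''=0$, so $F''(s_0)=6s_0-4$ regardless of $c$, which is positive for $s_0>2/3$. You would have to keep the admissibility constraint $c>s_0^2$, which at a critical point forces $(1-s_0)^{k-1}>1/2$, and then prove a constrained inequality. The proposal contains no such argument.

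The fallback has the same hole. Your one-sided tests reduce to $g(s)\le k(1-s)^{k-1}$ (at $c=s$) and $2s\,g(s)\ge 1$ (at $c=s^2$). For $c^*<s<\sqrt{c^*}$ both can fail, for instance $k=3$, $s=0.26$, where $c^*\approx 0.152$. There $\max(F-G,\,F-\sqrt c)$ is a convex function of $c$ that is nonnegative at both ends of $[s^2,s]$, and that gives no control inside. What is left is to show $F(s)\ge\sqrt{c^*}$ when $c=c^*$, which is the lemma itself at $c=c^*$.

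The missing idea is to pass to $x=1-s$, where $F=x^{k-1}-x^k+c\sum_{i=0}^{k-2}x^i$ and
\[
\frac{dF}{dx}=c\sum_{i=1}^{k-2}i\,x^{i-1}+(k-1)x^{k-2}-k\,x^{k-1}.
\]
Its coefficients have exactly one sign change. By Descartes' rule, $dF/dx$ therefore has a unique positive root, which is simple, with $dF/dx>0$ before it and $dF/dx<0$ after it. So $F$ is unimodal for $x>0$, and your endpoint reduction finishes the proof with no case distinction.

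The paper uses Descartes' rule differently. It factors out the known endpoint zeros, $F-G=((1-c)-x)P(x)$ and $F-\sqrt c=(\sqrt c-s)Q(x)$, and applies the rule to $P$ and $Q$. It then evaluates each at the opposite endpoint, splitting into cases by the sign of $G(c)-\sqrt c$. Your route, once completed this way, is shorter and avoids both the case split and the explicit factorizations.
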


\begin{proof}
We split the proof into two cases.

\medskip
\noindent\textbf{Case 1: $G(c)\le \sqrt c$.}
\smallskip

It will be convenient to set a new variable
\[
x:=1-s,
\]
so that we have $x\in[1-\sqrt c,\,1-c]$ and
\[
F(s)=(1-x)x^{k-1}+c\,\frac{1-x^{k-1}}{1-x}=(1-x)x^{k-1}+c\sum_{i=0}^{k-2}x^i.
\]
Hence,
\begin{align*}
F(s)-G(c)
&=x^{k-1}-x^k+\sum_{i=0}^{k-2}x^i-(1-c)\sum_{i=0}^{k-2}x^i-1+(1-c)^k \\
&=(1-c)^k-(1-c)\sum_{i=0}^{k-2}x^i+\sum_{i=1}^{k-1}x^i-x^k \\
&=\bigl((1-c)^k-x^k\bigr)+\bigl(x-(1-c)\bigr)\sum_{i=0}^{k-2}x^i \\
&=\bigl((1-c)-x\bigr)\left(\sum_{i=0}^{k-1}(1-c)^{k-1-i}x^i-\sum_{i=0}^{k-2}x^i\right).
\end{align*}

We need to show that $F(s)-G(c)\ge 0$. Observe that $1-c-x\ge0.$ If $1-c=x,$ then we are done. Otherwise,
\[
\frac{F(s)-G(c)}{1-c-x}=\sum_{i=0}^{k-1}(1-c)^{k-1-i}x^i-\sum_{i=0}^{k-2}x^i.
\]
The right-hand side is a polynomial $P(x)$ of degree $k-1$ in $x$, with coefficients
\[
(1-c)^{k-1}-1,\ (1-c)^{k-2}-1,\ \dots,\ (1-c)-1,\ 1.
\]
Since $0<1-c<1$, this sequence has exactly one sign change. By Descartes' rule of signs, $P(x)$ has at most one positive root.
In view of the fact that $P(0)=(1-c)^{k-1}-1<0,$ it follows that if a positive root exists, then $P(x)<0$ before the root and $P(x)>0$ after that.

Evaluating $P$ at $x_0:=1-\sqrt c$ and taking into account our assumption $G(c)\le \sqrt c$,  we find that
\[
P(x_0)=\frac{F(\sqrt c)-G(c)}{1-c-x_0}=\sqrt{c}-G(c)\ge 0.
\]
Hence, $P(x)\ge 0$ for every $x\in[x_0,1-c]$, which implies that
$F(s)-G(c)\ge 0,$ as required.

\medskip
\noindent\textbf{Case 2: $G(c)\ge \sqrt c$.}
\smallskip

As before, we set $x:=1-s,$ so that $x\in[1-\sqrt{c},\,1-c]$. Now we need to show that $F(s)\ge\sqrt c$.


A direct calculation, similar to the one used in Case 1, gives that
\begin{equation}\label{eq3}
F(s)-\sqrt c=(\sqrt c-s)Q(x),
\end{equation}
where
\[
Q(x):=\sqrt{c}\sum_{i=0}^{k-2}x^i-x^{k-1}
\]
is a polynomial of degree $k-1$ is $x$.

Since $\sqrt{c}-s\ge 0$, it remains to show that $Q(x)\ge 0$ on the interval $[1-\sqrt{c},1-c]$.

The coefficients of $Q(x)$ are
\[
\sqrt{c},\ \sqrt{c},\ \dots,\ \sqrt{c},\ -1,
\]
so there is exactly one sign change.
By Descartes' rule of signs, $Q(x)$ has at most one positive root.
We have
$Q(0)=\sqrt{c}>0,$
and the leading coefficient is $-1$. Thus, if a positive root exists, then $Q(x)>0$ before the root and $Q(x)<0$ after that.
Now let $x_0:=1-c.$
We compute
\begin{align*}
Q(x_0)
&=\sqrt{c}\sum_{i=0}^{k-2}(1-c)^i-(1-c)^{k-1}=\sqrt{c}\cdot \frac{1-(1-c)^{k-1}}{c}-(1-c)^{k-1} \\
&=\frac{1-(1-c)^{k-1}}{\sqrt{c}}-(1-c)^{k-1}=\frac{1-(1-c)^{k-1}(1+\sqrt{c})}{\sqrt{c}}.
\end{align*}
On the other hand,
\begin{align*}
G(c)\ge \sqrt{c}
&\iff 1-(1-c)^k\ge \sqrt{c} \\
&\iff (1-c)^{k-1}(1-\sqrt{c})(1+\sqrt{c})\le 1-\sqrt{c} \\
&\iff (1-c)^{k-1}(1+\sqrt{c})\le 1,
\end{align*}
Thus, our assumption $G(c)\ge \sqrt{c}$ is equivalent to the statement that
$Q(x_0)\ge 0.$
Therefore, any positive root of $Q$ is at least $x_0$, so $Q(x)\ge 0$ for all $x\in[0,x_0]$.
In particular, we have $Q(x)\ge 0$ for all $x\in[1-\sqrt{c},1-c]$. In view of (\ref{eq3}), this implies that
\[
F(s)-\sqrt{c}\ge 0.
\]
as required.
\medskip

Combining the above two cases yields
\[
F(s)\ge \min\{G(c),\sqrt c\},
\]
which completes the proof of Lemma~\ref{lemma2} and, hence, the theorem.
\end{proof}

\bigskip

\noindent
{\bf \large Acknowledgements.}
J\'anos Pach was supported by ERC Advanced Grant `GeoScape' No.\ 882971 and by the National Research, Development and Innovation Office, NKFIH, K-131529. He also acknowledges the hospitality of Academia Sinica, Taipei and the Matrix Research Institute, Creswick, where this note was written. He is grateful to P\'eter Frankl, Simon Griffiths, Bruce Reed, and G\'abor Tardos for their valuable suggestions.

\end{document}